\documentclass{article}
\usepackage[utf8]{inputenc}
\usepackage[english]{babel}
\usepackage{mathtools}
\usepackage{amsmath}
\usepackage{hyperref}
\usepackage{amssymb}
\usepackage{filecontents}
\usepackage[numbers]{natbib}
\usepackage{pstricks}
\usepackage{rotating}
\usepackage{tkz-berge}
\usepackage{tikz}
\usetikzlibrary{shapes.geometric}
\usetikzlibrary{calc,arrows,shapes,snakes,automata,backgrounds,petri}
\usepackage{subfigure}
\usepackage{amsthm}
\newtheorem{thm}{\bf Theorem}[section]
\newtheorem*{remark}{Remark}
\usepackage[a4paper, total={6in, 8in}]{geometry}

\newcommand{\footremember}[2]{%
    \footnote{#2}
    \newcounter{#1}
    \setcounter{#1}{\value{footnote}}%
}

\title{Stabilized Partitioning of Metapopulations Networks}
\author{Dinesh Kumar\footremember{alley}{Email: kdinesh@iisc.ac.in} 
\and Soumyendu Raha}
\date{\small{Department of Computational \& Data Sciences,\\ Indian Institute of Science, Bangalore-560012, India}}

\begin{document}

\maketitle

\begin{abstract}
A metapopulations network is a multi-patch habitat system, where
populations live and interact in the habitat patches, and individuals
disperse from one patch to the other via dispersal connections. The loss of dispersal connections among the habitat patches can impact the
stability of the system. In this work, we determine if there exist(s)
set(s) of dispersal connections removal of which causes partitioning(s) of the metapopulations network
into dynamically stable sub-networks. Our study finds that there exists a lower bound threshold Fiedler value which guarantees the dynamical stability of the network dynamics. Necessary and sufficient mathematical conditions for finding partitions that result in sub-networks with the desired threshold Fiedler values have been derived and illustrated with examples. Although posed and discussed in the ecological context, it may be pointed out that such partitioning problems exist across any spatially discrete but connected dynamical systems with reaction-diffusion. Non-ecological examples are power distribution grids, intra-cellular reaction pathway networks and high density nano-fluidic lab-on-chip applications. \\
~\\
\noindent
{\bf Keywords:}~Metapopulations Network; Lyapunov Stability; Graph Partitioning 
\end{abstract}

\section{Introduction}
Network of metapopulations of species are commonly associated in the nature with fragmented and patched habitats. 
Theoretical and experimental studies (for instance, see \cite{taylor1991studying, hanski1999metapopulation} and references therein) suggest that metapopulation structures and dispersal are important in the persistence of the species. The distribution of the populations over a range of spatially discrete  patches and their dispersal among the patches affect the populations' interactions and persistence in the ecological system.   

Any interference (by nature or enforced) with the metapopulations structure have the risk of disruptive dynamics of metapopulations. It is well known that habitat destruction is the biggest cause of extinction for many species. Habitat fragmentation either by nature or human activities such as agriculture and construction of roads, railways, and fencing etc., lead to both lower dispersal rates (as distances between patches are not easily crossed) and higher extinction rates (as smaller habitat patches increase the competitions, thus support fewer species). Study \cite{tilman1994habitat} shows that even a dominant species eventually gets extinct due to its habitat destruction. Various infrastructural constructions fragment a previously continuous habitat and reduce both the quantity and quality of the habitat \cite{reijnen2000weg}. Populations have the extinction risk and a lesser chance of recolonization in the small and isolated habitats  \cite{opdam1991metapopulation,hanski1999metapopulation}.
Destruction of dispersal links that required to connect unstable local populations certainly has a threatning effect on the populations. Thus the study of dynamically stable spatial cuts, that is, the set of dispersal connections among the habitat patches, the removal of which partition the metapopulations network into two or more subnetworks with a minimal negative effect on the stability of the metapopulations dynamics, is important. Although stability of the metapopulations systems has been studied in many works, e.g., \cite{jansen2000local, tromeur2016impact, namba1999effects} partitioning under stability constraints remains a research issue.

Spatial cuts introduce perturbation to the dynamics of the metapopulations network and the necessary and sufficient mathematical conditions for identifying cuts that produce subnetworks which remain dynamically stable under this perturbation are studied in this work. We consider the dynamic stability of the metapopulations network dynamics in terms of the eigenvalues of the linearized system.
A spatial cut is obtained by the deletion of a set of edges of the undirected graph induced by the metapopulations network, 
in which the nodes represent the habitat patches and the edges represent the dispersal links between the habitat patches while the dispersal rates are given by the edge weights. It may be recalled that optimal partitioning problems have non-polynomial time complexity and 
hence we focus on mathematical conditions for characterizing a dynamically stable spatial cut rather than optimally computing it. 

The article is organized as follows. In the next section we analyze the stability of the linearized metapopulations model and investigate the role of the Fiedler value in it. This is followed by working out the necessary and sufficient mathematical conditions for a viable partitioning of metapopulations networks, which preserve the stability of the linearized dynamics in the partitioned subnetworks. 

\section{Fiedler Value and Metapopulations Stability}
We consider a system of $m$ patches, where $n$ species interact within each individual patch and species disperse along the links connecting the patches. Let the local dynamics of the $i$-th species in an unconnected $j$-th patch be governed by
\begin{equation}
\label{nonspatial}
\dot{x}_{i,j} = f_{i,j}(x_{1,j}\dots,x_{n,j}),~i=1,\dots,n,~j=1,\dots,m,
\end{equation}
where $x_{i,j}$ is the $i$-th species in the $j$-th patch and the real valued function $f_{i,j} \in C^{1}([0,\infty)^n)$ represents the dynamics of it. 
If a species has same dispersal rates in both the directions along a link between any two patches of the network and 
if there is a dispersal loss to the species, then the system (\ref{nonspatial}) is modified to obtain
\begin{eqnarray}
\label{spatial}
\dot{x}_{i,j} & = & f_{i,j}(x_{1,j}\dots,x_{n,j})- \underset{k\sim j}{\sum}d^{i}_{jk}(x_{i,j}-x_{i,k})-l_{i}x_{i,j}, \nonumber \\
&& i=1,\dots,n,~j=1,\dots,m,
\end{eqnarray}
where $d^{i}_{jk}$ is the per capita successful dispersal rate of $i$-th species between patches $j$ and $k$. The parameter $l_{i}$ represents the overall dispersal loss of the $i$-th species while dispersing from a patch.

Now writing the $i$-th species' dynamics in the whole patch network together by combining the local dynamics and dispersal, we get
\begin{equation}
\label{indi-spatial}
\dot{x}_{i} = f_{i}(x_1,\dots,x_n)- \mathcal{L}_{i}x_{i}-l_{i}I_{m}x_{i},~~i=1,\dots,n,
\end{equation}
where $x_{i}=(x_{i,1},\dots ,x_{i,m})^T\in \mathbb{R}^m,~f_{i}=(f_{i,1},\dots,f_{i,m})^T : R^{m\times n} \to \mathbb{R}^m$ and $\mathcal{L}_{i}\in M_m(\mathbf{R})$ is the Laplacian matrix of the network, which represents the dispersal of the $i$-th species among the patches. $I_{m}$ denotes the identity matrix of dimension $m \times m$.

With $x=(x_1,\dots,x_n)^T \in \mathbb{R}^{m \times n},~f=(f_1,\dots,f_n)^T :  R^{m\times n} \to \mathbb{R}^{m \times n},~~L=\mathcal{L}_1\bigoplus\dots\bigoplus \mathcal{L}_{n} \in M_{mn}(\mathbf{R})$ and $E=l_{1}I_{m}\bigoplus\dots\bigoplus l_{n}I_{m}\in M_{mn}(\mathbf{R})$ the overall multipopulation dynamics of all species in the patched ecosystem can written as follows.
\begin{equation}
\label{complete-spatial}
\dot{x} = f(x)- Lx -Ex.
\end{equation}

Here, all the species populations in the metapopulations network collectively represented by $x$, and $f(x),~L$ and $E$ represent its internal patch dynamics, inter-patch dispersal and dispersal loss respectively.

We are interested here in the local perturbation behavior of this system at its co-existential equilibrium solution in terms of how this perturbation 
eventually dies out or grows. If the perturbation does not grow with time, then the species populations in the spatial patch system are deemed to be locally stable around the co-existential equilibrium point. Otherwise it is considered to be unstable.

Let $\Bar{x}(t)=\Bar{x} \in \mathbb{R}^{m \times n},~t \in [0,\infty)$ be a non-trivial (component wise positive) equilibrium solution of the system (\ref{complete-spatial}). Ecologically, it means that all $n$ species populations can co-exist with the size $\Bar{x}$ and the size will remain as it is as time passes.

Let $\varepsilon(t)$ be the perturbation to the equilibrium solution $\Bar{x}$ at time $t$ so that $x(t):=\Bar{x}+\varepsilon(t)$, putting which into the system (\ref{complete-spatial}), we get 

\begin{equation}
\label{perturb-complete-spatial}
\dot{\varepsilon} = f(\Bar{x}+\varepsilon)- L(\Bar{x}+\varepsilon) -E(\Bar{x}+\varepsilon)
\end{equation}

Now by using Taylor expansion around $\Bar{x}$ and ignoring the higher order terms, the above system can be written as  

\begin{equation}
\label{linearized}
\dot{\varepsilon}= (Df(\Bar{x})- L -E)\varepsilon,
\end{equation}

To simplify the system (\ref{linearized}), let $\varepsilon=Py$ (change of variables), where $P=P_1\bigoplus\dots\bigoplus P_{n} \in M_{nm}(\mathbf{R})$ is partitioned conforming to $L$ and the columns of each $P_{i}$ consists of eigenvectors of the Laplacian $\mathcal{L}_{i}$. Since the Laplacian matrix is a symmetric matrix, the construction of a $P$ is always possible. Thus the system (\ref{linearized}) can be written as  

\begin{equation*}
P\dot{y} = Df(\Bar{x})Py- LPy-EPy,
\end{equation*}
\begin{equation*}
\implies \dot{y} = P^{-1}Df(\Bar{x})Py- P^{-1}LPy-Ey,
\end{equation*}
that is,
\begin{equation}
\label{FinalSystem}
\dot{y} = (P^{-1}Df(\Bar{x})P- \Lambda-E)y,
\end{equation}

Since the matrix $L$ is diagonalizable, $\Lambda$ is the block diagonal matrix conformal with $L$ each block diagonal entries of which are the eigenvalues of the corresponding block of the matrix $L$. There are total $mn$ equations in the system (\ref{FinalSystem}) ($m$ equations for each population or $n$ equations for each patch), out of which $n$ equations corresponds to $0$ eigenvalue of $L$ and others correspond to the positive eigenvalues of $L$.

If the real parts of the eigenvalues of the coefficient matrix $P^{-1}Df(\Bar{x})P- \Lambda-E$ in above system (\ref{FinalSystem}) are either negative or zero, then the perturbation $\varepsilon$ to the equilibrium solution $\Bar{x}$ does not grow over time, and hence the non-trivial equilibrium solution $\Bar{x}$ of the dispersal system (\ref{complete-spatial}) is stable.

One of the necessary conditions for eigenvalues of the matrix $(P^{-1}Df(\Bar{x})P- \Lambda-E)$ to have non-positive real part is the condition {\it tr}$(P^{-1}Df(\Bar{x})P- \Lambda-E) \leq 0$, which can be ensured by having $$\lambda_{2}\geq \frac{1}{n(m-1)}\underset{q}{\sum} (P^{-1}Df(\Bar{x})P-E)_{qq},$$
where $\lambda_{2}=\underset{i}{\min}\{\lambda^{i}_{2}: ~\lambda^{i}_{2}~ \text{is the second smallest eigenvalue of} ~\mathcal{L}_{i}$\} and it corresponds to the species which have minimum Fiedler value (second smallest eigenvalue) of its Laplacian matrix. 

By the Gershgorin disc theorem \cite{bhatia2013matrix}, if the following conditions hold true,
\begin{enumerate}
    \item  $l_{q}-(P^{-1}Df(\Bar{x})P)_{qq}\geq \underset{r\neq q}{\sum} |(P^{-1}Df(\Bar{x})P)_{qr}|$ (where $q$-th row of the coefficient matrix corresponds to zero eigenvalue of $L$) and 
    \item  $\lambda_{2}+l_{s}-(P^{-1}Df(\Bar{x})P)_{ss}\geq \underset{t\neq s}{\sum} |(P^{-1}Df(\Bar{x})P)_{st}|$ (where $s$-th row of the coefficient matrix corresponds to a positive eigenvalue of $L$)
\end{enumerate}
then the system (\ref{complete-spatial}) is locally stable around the non-trivial equilibrium solution $\Bar{x}$. 

Since the Fiedler value $\lambda_2$ signifies the connectivity of a graph network, it is greater than zero if and only if the graph is connected (a path between any two graph nodes exists). More the Fiedler value of a graph is, the more strongly connected the graph is. The dispersal connections in the metapopulations structure are known to be important for the metapopulations stability, and as an example given later shows, their absence causes the instability in the system. Hence it truly makes sense that both the necessary and sufficient conditions for linear stability of the metapopulations require the Fiedler value to be greater than or equal to some threshold level. For the dynamic stability of the metapopulations system we thus require the metapopulations network be sufficiently connected and therfore to have viable partition(s), metapopulations network should rich in strongly enough connected components.

We denote the Fideler value threshold level by $\tau$. Assume that $\tau=\underset{s}{\max}\underset{t\neq s}{\sum} |(P^{-1}Df(\Bar{x})P)_{st}|+(P^{-1}Df(\Bar{x})P)_{ss}-l_{s}$ and the local populations dynamics is not altered after the network partitioning by any other means (since dispersal connections play no role in the internal patch dynamics), then ensuring the Fiedler value threshold level (\textit{i.e.,} $\lambda_2\geq \tau$)  is satisfied 
in each of the partitioned component networks is sufficient for dynamic stability in the component networks.

\section{Necessary and Sufficient Conditions for Stable Graph Partitioning}

We define a simple (no self-loops), undirected and connected graph, denoted by $G(V,E)$ (or simply $G$), to be stable if its Fiedler value, denoted by $\lambda_2(\mathcal{L}(G))$ (or simply as $\lambda_2(G)$), is at least some pre-set $\tau > 0$. A partition of the graph is stable if it gives rise to components (connected subgraphs) each of which is stable. In this section, we shall provide the necessary and sufficient conditions for the stable components  of a graph or stable graph partitioning. 

There are costs, namely, internal and external costs which are associated with every graph cut. If a graph $G$ is separated into two disjoint subgraphs $G_1$ and $G_2$ by some cut, then, the external and internal costs of a node $s$ of the sub-graph $G_i~(i=1,2)$ are denoted by $E_{i}(s)$ and $I_{i}(s)$ respectively and are defined as follows.

The external cost of a node $s$ is the sum of the weights of the edges (that is, dispersal rates) that connect the node $s$ with the nodes of $G\backslash G_{i}$ . That is,
\[E_{i}(s)=\underset{t\sim s}{\sum}d_{st},~ ~\text{where}~ t ~\in G\backslash G_{i}.~~~~~~~\]
The internal cost of a node $s$ is the sum of the weights of edges that connect the node $s$ with the nodes of $G_i$ itself. That is,
 \[I_{i}(s)=\underset{t \sim s}{\sum}d_{st}, ~~\text{where}~ t ~\in G_i.~~~~~~~~~\]

\begin{thm}
\label{Necessarycond}
Let $G(V,E)$ be a stable graph with size $n (\geq 4)$, and it is partitioned into $k$ stable components $G_{1}, \dots, G_{k}$ of size $n_1, \dots n_k$ respectively, such that $\sum_{i} n_{i} =n,~ n_{i}\geq 2, ~i=1,\dots, k$. Then $I^1_i +I^2_i \geq \tau,~~\forall~ i$, where $I^{j}_{i}~(j=1,2)$ is the $j$-th smallest internal cost of $G_{i}$.
\end{thm}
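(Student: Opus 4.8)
The plan is to exploit the standard upper bound on the Fiedler value of a graph in terms of vertex degrees (weighted degrees, since the edges carry dispersal-rate weights). For a connected weighted graph $H$ on $p$ vertices, the algebraic connectivity satisfies $\lambda_2(\mathcal L(H)) \le \frac{p}{p-1}\,\delta(H)$, where $\delta(H)$ is the minimum weighted degree; more to the point, a sharper classical inequality (Fiedler) gives $\lambda_2(\mathcal L(H)) \le \min_{u\ne v}\bigl(d(u)+d(v)\bigr) - (\text{something nonnegative})$ when $u,v$ are non-adjacent, and $\lambda_2(\mathcal L(H))\le d(u)$ for any cut vertex $u$. The cleanest route for our setting is the fact that for any vertex $u$, $\lambda_2(\mathcal L(H)) \le \frac{p}{p-1}d(u)$ is too weak, so instead I would use: $\lambda_2(\mathcal L(H)) \le d(u) + d(v)$ whenever $\{u,v\}$ is not an edge, and handle the adjacent case separately. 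First I would set up, for each component $G_i$, the notation that $I^1_i$ and $I^2_i$ are the smallest and second-smallest weighted degrees \emph{within} $G_i$ — note that after partitioning, the internal cost $I_i(s)$ of a node equals its weighted degree in the subgraph $G_i$, since all external edges have been cut.

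The key steps, in order: (1) Fix a component $G_i$ with $n_i \ge 2$ vertices. Since $G_i$ is stable, $\lambda_2(\mathcal L(G_i)) \ge \tau$. (2) Let $u$ be a vertex of $G_i$ realizing the minimum internal cost $I^1_i$ (its weighted degree in $G_i$) and $v$ a vertex realizing the second-smallest internal cost $I^2_i$. (3) Apply the eigenvalue bound. If $u$ and $v$ are non-adjacent in $G_i$, the inequality $\lambda_2(\mathcal L(G_i)) \le d_{G_i}(u) + d_{G_i}(v) = I^1_i + I^2_i$ gives the claim directly. (4) If $u$ and $v$ are adjacent, I would instead invoke the test vector argument: take the vector supported on $\{u,v\}$ with entries chosen orthogonal to the all-ones vector and weighted to balance, and bound the Rayleigh quotient $\frac{z^\top \mathcal L(G_i) z}{z^\top z}$ from above; the numerator is a sum over edges incident to $u$ or $v$ and is controlled by $d_{G_i}(u)+d_{G_i}(v)$, again yielding $\lambda_2(\mathcal L(G_i)) \le I^1_i + I^2_i$. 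Chaining (1) and (3)/(4): $\tau \le \lambda_2(\mathcal L(G_i)) \le I^1_i + I^2_i$, which is exactly what we want, for every $i$.

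The main obstacle I anticipate is the adjacent case in step (4): getting a clean Rayleigh-quotient bound when $u\sim v$ requires choosing the two-support test vector carefully so that the edge $\{u,v\}$ contributes in a controlled way and the orthogonality-to-$\mathbf 1$ constraint is met without blowing up the denominator; one must also confirm that $n_i\ge 2$ (and the hypothesis $n\ge 4$, $n_i\ge 2$) is enough for such a vector to exist and be nonzero. A secondary point to be careful about is that the bound $\lambda_2 \le d(u)+d(v)$ for non-adjacent $u,v$ is usually stated for unweighted graphs; I would restate and verify it for the weighted Laplacian, which is routine since the Laplacian quadratic form $z^\top \mathcal L z = \sum_{\{a,b\}\in E} d_{ab}(z_a - z_b)^2$ carries the weights through transparently. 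Finally, I should double-check the degenerate possibilities — e.g. $G_i$ being a single edge ($n_i = 2$), where $I^1_i = I^2_i = d_{12}$ and $\lambda_2 = 2d_{12} = I^1_i + I^2_i$, so the inequality holds with equality and is consistent.
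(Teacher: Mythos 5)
Your proposal is correct, but it takes a genuinely different route from the paper. The paper's proof is a majorization argument: expanding the $n-2$ standard unit vectors corresponding to the largest diagonal entries in the eigenbasis of $\mathcal{L}(G_i)$, it invokes Schur's theorem to get $\sum_{j=1}^{n-2}d_j\leq\sum_{i=3}^{n}\lambda_i$, hence $\lambda_1+\lambda_2\leq d_{n-1}+d_n$, i.e.\ the sum of the two smallest diagonal entries (internal costs) dominates $0+\lambda_2(G_i)\geq\tau$. You instead bound $\lambda_2(G_i)$ directly by Courant--Fischer with the two minimum-internal-cost vertices $u,v$; and in fact the case split you propose is unnecessary, since the single test vector $z=e_u-e_v\perp\mathbf{1}$ gives in the weighted setting
\[
\lambda_2\bigl(\mathcal{L}(G_i)\bigr)\;\leq\;\frac{z^{T}\mathcal{L}(G_i)z}{z^{T}z}\;=\;\frac{d(u)+d(v)+2w_{uv}}{2}\;\leq\;d(u)+d(v)\;=\;I^1_i+I^2_i,
\]
where $w_{uv}\geq 0$ is the weight of the edge $uv$ (zero if non-adjacent) and the last step uses $w_{uv}\leq\min\{d(u),d(v)\}$; so the ``adjacent case obstacle'' you flag dissolves, and the non-adjacent case even yields the sharper bound $\lambda_2\leq(I^1_i+I^2_i)/2$. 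Your argument is more elementary and self-contained (one Rayleigh quotient, weights handled transparently), whereas the paper's Schur-majorization route generalizes at no extra cost to statements involving sums of the $k$ smallest eigenvalues versus the $k$ smallest internal costs. Both correctly use the observation that, after the cut, the internal cost of a node is its weighted degree in $G_i$, i.e.\ a diagonal entry of $\mathcal{L}(G_i)$, and both need only $n_i\geq 2$ and connectedness of the stable component so that $\lambda_1(G_i)=0$ with eigenvector $\mathbf{1}$.
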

\begin{proof}
Let $\mathcal{L}$ is the Laplacian of the graph $G$ and $d_i,~(i=1,\dots, n)$ are the diagonal entries of $\mathcal{L}$ in non-increasing order. Let $\mathcal{L}x_{i}=\lambda_{i}x_{i},$, where $\lambda_i~(i=1,\dots, n)$ are in non-decreasing order and $x_i$s are orthonormal vectors. Note that $\mathcal{L}$ is symmetric, thus the standard unit vector $e_{j}=\overset{n}{\underset{i=1}{\sum}}a_{ij}x_{i}, (j=1,\dots,n-2)$. We  have, 
{\small
\begin{equation}
\label{equation}
\overset{n-2}{\underset{j=1}{\sum}}d_j=\overset{n-2}{\underset{j=1}{\sum}}e^{T}_{j}\mathcal{L}e_{j}=\overset{n-2}{\underset{j=1}{\sum}}\overset{n}{\underset{i=1}{\sum}}a^{2}_{ij}\lambda_{i}=\overset{n}{\underset{i=1}{\sum}}\overset{n-2}{\underset{j=1}{\sum}}a^{2}_{ij}\lambda_{i}\leq \overset{n}{\underset{i=3}{\sum}}\lambda_{i}
\end{equation}}

Note that the above inequality (\ref{equation}) is a particular case of the Schur theorem \cite{bhatia2013matrix, cvetkovic2009introduction}. This inequality arises due to following facts:  (1) $\overset{n-2}{\underset{j=1}{\sum}}a^{2}_{ij}$  is the $i$-th diagonal entry of the matrix $AA^T, ~A=(a_{ij})$, and (2) $AA^T =\begin{bmatrix}I_{n-2}&0\\0&0\end{bmatrix}$, since $I_{n-2}=E^{T}E=(XA)^T(XA)$, where $E=[e_1,\dots,e_{n-2}]$ and $X=[x_1,\dots,x_n]$. It follows that $n-2$ values of $\overset{n-2}{\underset{j=1}{\sum}}a^{2}_{ij},~(i=1,\dots,n)$ are one and other two values of it are zero.

From the equation (\ref{equation}), we have $\lambda_{1}+\lambda_{2}\leq d_{n-1}+d_{n}$ for any graph Laplacian $\mathcal{L}$. Since $\lambda_{1}(G_{i})=0,~\lambda_{2}(G_{i})\geq \tau$ and diagonal entries of $\mathcal{L}(G_{i})$ represent the internal costs of nodes of the partitioned component $G_{i}$. Thus $I^1_i +I^2_i \geq\lambda_{2}(G_{i})\geq \tau$.

\end{proof}

\begin{thm}
\label{sufficientcond}
Let $G(V,E)$ be a stable graph with size $n(\geq 4)$ and let it be partitioned into $k$ components $G_{1}, \dots, G_{k}$ of size $n_1, \dots n_k$  respectively, such that $\sum_{i} n_{i} =n,~ n_{i}\geq 2$. Let $E^{*}_{i}$ be the largest external cost of $G_{i}$ and if $E^{*}_{i}\leq \lambda_{2}(G)-\tau$, then $G_{i}$ is a stable component. 
\end{thm}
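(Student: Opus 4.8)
The plan is to reduce the assertion ``$G_i$ is stable'' to the single eigenvalue inequality $\lambda_2(G_i) \ge \lambda_2(G) - E^{*}_i$; combined with the hypothesis $E^{*}_i \le \lambda_2(G) - \tau$ this gives $\lambda_2(G_i) \ge \tau$, and since $\tau > 0$ this also forces $G_i$ to be connected, so it is genuinely a stable component and no separate connectivity argument is needed. The bridge between $\mathcal{L}(G_i)$ and $\mathcal{L}(G)$ is the principal submatrix of $\mathcal{L}(G)$ indexed by the vertex set of $G_i$, which I will call $B$. The first step is to verify the identity $B = \mathcal{L}(G_i) + D_i$, where $D_i = \operatorname{diag}\big(E_i(s) : s \in V(G_i)\big)$ collects the external costs of the nodes of $G_i$: the off-diagonal entries of $B$ are exactly the negatives of the weights of edges internal to $G_i$ (edges to $G\backslash G_i$ are removed by the cut and do not appear), so they coincide with the off-diagonal part of $\mathcal{L}(G_i)$, while each diagonal entry $\mathcal{L}(G)_{ss}$ splits as $I_i(s) + E_i(s)$, the internal part $I_i(s)$ being precisely the diagonal of $\mathcal{L}(G_i)$.

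Next, since every external cost is nonnegative and at most $E^{*}_i$, we have $0 \preceq D_i \preceq E^{*}_i I$ in the Loewner order, hence $\mathcal{L}(G_i) = B - D_i \succeq B - E^{*}_i I$. Monotonicity of eigenvalues under the Loewner order (Weyl) then gives, eigenvalue by eigenvalue, $\lambda_2(\mathcal{L}(G_i)) \ge \lambda_2(B) - E^{*}_i$. It remains to compare $\lambda_2(B)$ with $\lambda_2(\mathcal{L}(G))$, and here the Cauchy interlacing theorem for principal submatrices of a symmetric matrix applies: with eigenvalues listed in nondecreasing order, the $k$-th smallest eigenvalue of a principal submatrix is at least the $k$-th smallest of the parent matrix, so in particular $\lambda_2(B) \ge \lambda_2(\mathcal{L}(G)) = \lambda_2(G)$ (this uses $n_i \ge 2$, so that a second eigenvalue of $B$ exists). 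Chaining the two bounds yields $\lambda_2(G_i) \ge \lambda_2(G) - E^{*}_i \ge \tau$.

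The submatrix identity, the Loewner comparison, and the appeals to Weyl's inequality and Cauchy interlacing are all routine; the one place that needs care — and the main obstacle — is getting the direction of the interlacing right, namely that deleting rows/columns \emph{raises} (or keeps) each ordered eigenvalue rather than lowering it, and checking that the decomposition $B = \mathcal{L}(G_i) + D_i$ genuinely relies only on the fact that the partition is produced by edge deletion, so that a cut edge at $s$ contributes to the diagonal of $B$ (through the degree of $s$ in $G$) but never to $\mathcal{L}(G_i)$. I would also remark that the standing assumptions $n \ge 4$ and $n_i \ge 2$ for all $i$ are exactly what keeps every Fiedler value appearing in the argument well defined.
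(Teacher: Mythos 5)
Your proposal is correct and follows essentially the same route as the paper: the same decomposition of the principal submatrix as $\mathcal{L}(G_i)+D_i$ with $D_i$ the diagonal of external costs, Cauchy interlacing to get $\lambda_2$ of the submatrix above $\lambda_2(G)$, and a Weyl-type bound (you phrase it via Loewner-order monotonicity, the paper via $\lambda_2(A+D)\leq\lambda_2(A)+\lambda_{\max}(D)$, which is the same step) to conclude $\lambda_2(G_i)\geq\lambda_2(G)-E^{*}_{i}\geq\tau$. Your added remarks on the interlacing direction, the role of $n_i\geq 2$, and connectivity following from $\lambda_2(G_i)\geq\tau>0$ are accurate but not a different argument.
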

\begin{proof}
Let $\mathcal{L}$ be the Laplcian of the graph $G$ and $L_{1}, \dots, L_{k}$ be the principal submatrices of $\mathcal{L}$ corresponding to $G_{1}, \dots, G_{k}$ respectively. By the Cauchy interlacing theorem \cite{bhatia2013matrix}, we get the following relationship in terms of the second-smallest eigenvalues of $\mathcal{L}$ and its principal submatrix $L_i$ corresponding to $G_{i}$,
$$\lambda_{2}(\mathcal{L}(G))\leq \lambda_{2}(L_{i})$$
Now writing $L_{i}=\mathcal{L}(G_{i})+D_{i}$, where $\mathcal{L}(G_{i})$ is the Laplacian matrix of $G_{i}$ and $D_{i}$ is the diagonal matrix whose diagonal entries are consisting of external costs of nodes in partitioned component $G_{i}$. Applying the Weyl's inequality \cite{bhatia2013matrix}, we get
$$\lambda_{2}(\mathcal{L}(G))\leq \lambda_{2}(L_{i})=\lambda_{2}(\mathcal{L}(G_{i})+D_{i})\leq \lambda_{2}(\mathcal{L}(G_{i}))+\lambda_{\max}(D_{i})$$
Since $\lambda_{\max}(D_{i})$ is the maximum external cost $E^{*}_{i}$ and by hypothesis $E^{*}_{i}\leq \lambda_{2}(G)-\tau$, therefore from the above inequality we obtain  $\lambda_{2}(\mathcal{L}(G_{i}))\geq \tau$ which in turn implies that $G_{i}$ is a stable partitioned component.
\end{proof}

\textbf{Example:} Consider the weighted graph $G(5,6)$ shown in Figure \ref{LaTexFigure}\subref{graphsuff} with its Fiedler value $\lambda_2=3.625$. Graph cut $C_1$ separates the graph into two components $G_1=\{v_1, v_2\}$ and $G_{2}=\{v_3,v_4,v_5\}$. Both the components having a node with maximum external cost $3$. If we assume that the graph is stable, that is, $\tau \leq 3.625$, then the sufficient condition in Theorem \ref{sufficientcond} holds true as long as $\tau \leq 0.625$. Thus both $G_1$ and $G_2$ must be stable subgraphs, in fact they are, as their Fiedler values are $6$ and $4.26$ respectively. Also, in this case, one can see the necessary condition given by Theorem \ref{Necessarycond} is essentially satisfied. The sum of the two smallest internal costs of nodes in $G_1$ and $G_2$ are $6$ and $8$ respectively, and both are greater than $\tau~(\leq 3.625)$.

For $\tau>0.625$, partitioned components with the maximum external cost $3$ may or may not be stable. Corresponds to the graph cut $C_1$, both the components are stable for all values of $\tau$. Whereas corresponds to the cut $C_2$, the component $\{v_1,v_2,v_3\}$ that also has a node with maximum external cost $3$ is unstable for $\tau>2.35$, as it has the Fiedler value $2.35$.

Now considering the graph $G(4,4)$ shown in Figure \ref{LaTexFigure}\subref{Graphnecess} with the Fiedler value $\lambda_2 =0.9529$. The sum of two smallest internal costs of the component $\{v_1,v_4\}$ (produces by the cut $C$) is $0.2$, thus the component $\{v_1,v_4\}$ is unstable if $\tau >0.2$ (as the necessary condition given by Theorem \ref{Necessarycond} fails) and stable if $\tau \leq 0.2$ (as the Fiedler value of the component is $0.2$). 

\begin{figure}[!h]
\centering
\subfigure[]
{
\begin{tikzpicture}[scale=0.55,node_style/.style={circle,fill=black!50!,font=\sffamily\large\bfseries},
   edge_style/.style={draw=black},auto]
\node[node_style] (v1) at (-3,2) {$v_1$};
\node[node_style] (v2) at (4,2) {$v_2$};
\node[node_style] (v3) at (9,-1) {$v_3$};
\node[node_style] (v4) at (4,-4) {$v_4$};
\node[node_style] (v5) at (-3,-4) {$v_5$};
    \draw[edge_style]  (v1) edge node{3} (v2);
    \draw[edge_style]  (v2) edge node{2} (v3);
    \draw[edge_style]  (v3) edge node{3} (v4);
    \draw[edge_style]  (v4) edge node{5} (v5);
    \draw[edge_style]  (v5) edge node{2} (v1);
    \draw[edge_style]  (v5) edge node{1} (v2);
    \draw [line width=0.5mm, red, style=dashed ] (-4,0) .. controls (3,-0.5) and (3.5,0.5) .. (7,2)node [right] {$C_{1}$};;
    \draw [line width=0.5mm, red, style=dashed ] (-4,-2.5) .. controls (3,-2) and (3.5,-2) .. (7,-3.5)node [right] {$C_{2}$};;
    \end{tikzpicture}
    \label{graphsuff}
    }
\hspace{.5cm}
\subfigure[]
{
\begin{tikzpicture} [scale=0.55,node_style/.style={circle,fill=black!50!,font=\sffamily\large\bfseries},
   edge_style/.style={draw=black},auto]
    \node[node_style] (v1) at (-3,2) {$v_1$};
    \node[node_style] (v2) at (4,2) {$v_2$};
    \node[node_style] (v3) at (4,-4) {$v_3$};
    \node[node_style] (v4) at (-3,-4) {$v_4$};
    \draw[edge_style] (v1) edge node{1} (v2);
    \draw[edge_style] (v2) edge node{2} (v3);
    \draw[edge_style] (v3) edge node{1} (v4);
    \draw[edge_style] (v4) edge node{0.1} (v1);
    \draw [line width=0.5mm, red, style=dashed ] (-0.5,-5) -- (-0.5,3.2) node [right] {$C$};;
   \end{tikzpicture}
\label{Graphnecess}
}
\caption{Figure (a)~ $\lambda_2 (G(5,6))=3.625$ (b) $\lambda_2 (G(4,4))=0.9529$}
\label{LaTexFigure}
\end{figure}
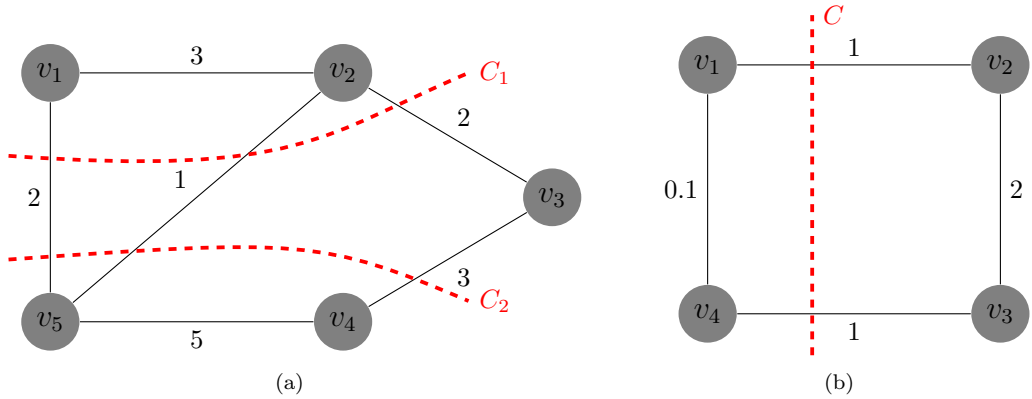

Note that the condition on maximum external cost in Theorem \ref{sufficientcond} is also sufficient to get the stable subgraph if we remove some nodes and edges associated with them. Ecologically, that corresponds to complete destruction of some habitat patches in the metapopulations structure. An alternative approach to this, is presented in the proof of next theorem while deleting some specific type of nodes.

\begin{thm}
Let $G(V,E)$ be a stable graph with size $n(\geq 3)$ and let $Y$ be a Fiedler vector of the graph Laplacian $\mathcal{L}$. Let $W$ be the non-empty set of all vertices for which the valuation of Fiedler vector is zero, that is, $W=\{v\in V : Y(v)=0\}$. If $E^{*}\leq \lambda_{2}(G)-\tau$, where $E^{*}$ is the largest external cost of a component of subgraph $G-W$ (induced by deleting the set $W$ of vertices and edges incident on them), then the component is stable.
\end{thm}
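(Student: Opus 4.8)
The plan is to adapt the argument of Theorem \ref{sufficientcond} almost verbatim; the only genuinely new ingredient is a piece of bookkeeping that explains why the zero-set $W$ of the Fiedler vector is the natural set to delete. Fix a connected component $C$ of $G-W$, write $p=|C|$, and let $L_C$ be the principal submatrix of $\mathcal{L}(G)$ indexed by the vertices of $C$. First I would record the decomposition $L_C=\mathcal{L}(C)+D_C$, where $\mathcal{L}(C)$ is the Laplacian of the induced subgraph $C$ and $D_C$ is the diagonal matrix whose entries are the external costs $E_C(v)$. This holds because every edge of $G$ leaving $C$ must terminate in $W$ (an endpoint outside $W$ would lie in the same component $C$), so the diagonal entry of $L_C$ at $v$ is exactly $I_C(v)+E_C(v)$ while the off-diagonal structure inside $C$ is unchanged; in particular $D_C$ is positive semidefinite with $\lambda_{\max}(D_C)=E^{*}$, the largest external cost of $C$.

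Next I would run exactly the Cauchy-interlacing-plus-Weyl chain of Theorem \ref{sufficientcond}. Cauchy interlacing for the $p\times p$ principal submatrix $L_C$ of the $n\times n$ matrix $\mathcal{L}(G)$ gives $\lambda_2(\mathcal{L}(G))\le\lambda_2(L_C)$ (this requires $p\ge 2$; see below). Weyl's inequality applied to $L_C=\mathcal{L}(C)+D_C$ gives $\lambda_2(L_C)\le\lambda_2(\mathcal{L}(C))+\lambda_{\max}(D_C)=\lambda_2(\mathcal{L}(C))+E^{*}$. Chaining these with the hypothesis $E^{*}\le\lambda_2(G)-\tau$ yields $\lambda_2(\mathcal{L}(C))\ge\lambda_2(L_C)-E^{*}\ge\lambda_2(\mathcal{L}(G))-E^{*}\ge\tau$, i.e. $C$ is stable.

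The relevance of $W$ being the zero-set of a Fiedler vector $Y$ can be made explicit, and this is the ``alternative'' flavour hinted at before the statement: restricting $\mathcal{L}Y=\lambda_2(G)Y$ to a vertex $v\in C$ and using $Y(u)=0$ for each neighbour $u\in W$, one obtains $(\mathcal{L}(C)Y_C)(v)=(\lambda_2(G)-E_C(v))Y(v)$, that is, $L_C Y_C=\lambda_2(G)\,Y_C$, where $Y_C$ is the restriction of $Y$ to $C$ (nonzero, since every vertex of $C$ lies outside $W$). Hence $\lambda_2(G)$ is genuinely an eigenvalue of $L_C$; this both explains the Cauchy bound and shows that $W$ is the ``right'' set to remove. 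If one wanted a self-contained route, the Weyl step could be applied directly to this eigenrelation instead of invoking interlacing.

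The only points needing care are degenerate configurations, and this is where I would spend the attention rather than on any deep obstacle. I would check that under the hypotheses no component $C$ can be a single vertex $v$: if it were, all edges at $v$ would go into $W$, and evaluating $\mathcal{L}Y=\lambda_2 Y$ at $v$ forces $E_C(v)=d_v=\lambda_2(G)$, whence $E^{*}\ge E_C(v)=\lambda_2(G)$ together with $E^{*}\le\lambda_2(G)-\tau$ gives $\tau\le 0$, contradicting $\tau>0$; so $p\ge 2$ and the interlacing step is legitimate. I would also note that $W\neq\emptyset$ is assumed and that connectedness of $G$ forces each such $C$ to carry at least one external edge, so $D_C\neq 0$. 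Beyond these sanity checks the proof is a direct transcription of the scheme used for Theorem \ref{sufficientcond}.
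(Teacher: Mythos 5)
Your proof is correct and follows essentially the same route as the paper's: the decomposition $L_C=\mathcal{L}(C)+D_C$, the bound $\lambda_2(\mathcal{L}(G))\le\lambda_2(L_C)$ (which the paper obtains from the restricted eigenrelation $L_C Y_C=\lambda_2 Y_C$ together with interlacing, and which you record as well), and then Weyl's inequality combined with the hypothesis $E^{*}\le\lambda_2(G)-\tau$. Your explicit check that the component in question cannot be a single vertex (so that the interlacing/second-eigenvalue step is legitimate) is a small but genuine refinement that the paper's proof omits.
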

\begin{proof}
Let $L_{1},\dots,L_{k}$ be the principal submatrices of $\mathcal{L}$ corresponding to the graph components of $G-W$ and $L_{W}$ be the principal submatrix corresponding to $W$. Then the Laplacian $\mathcal{L}$ can be written as follows by using, if necessary, a permutation similarity operation. 

$$\begin{bmatrix}L_{1}&0&\cdots& 0 &\\0&L_{2}&\cdots& 0 &\\ \vdots& \vdots &\ddots & \vdots & C\\ 0&0&\cdots&L_{k} & \\ ~\\ &&C^T&&L_{W}\end{bmatrix},$$

Let $[{Y^{1}_{1}}^T~ \dots~ {Y^{k}_{1}}^T ~{Y_{2}}^T]^T$ be the conformal partition of the Fiedler vector $Y$, where $Y_{2}$ corresponds to $W$, and hence it is the zero vector. Now using the fact that $\lambda_{2}$ is the Fiedler value of $\mathcal{L}$, we have $L_{i}Y^{i}_{1}=\lambda_2Y^{i}_{1},~(i=1,\dots,k)$ which indicates $\lambda_{2}$ is also an eigenvalue of the principal submatrix $L_{i}$ corresponding to the $i$-th component of $G-W$.
We claim that either it is the smallest or the second smallest eigenvalue of $L_{i}$. Suppose contrary to it, it is the $r(>2)$-th smallest eigenvalue of $L_{i}$. Then by Cauchy interlacing theorem the $r$-th eigenvalue of $\mathcal{L}$ is less than or equal to $r$-th eigenvalue of $L_{i}$ which is $\lambda_2$, a contradiction.

Thus, $\lambda_{2}$ is less than or equal to second smallest eigenvalue of $L_{i}$, which is equal to sum of the Laplacian matrices of the $i$-th component of $G-W$ and some diagonal matrix $D$. Proceeding as in the proof of the previous theorem, using Weyl's inequality and that $E^{*}\leq \lambda_{2}(G)-\tau$, it follows that the Fiedler value of $i$-th component of $G-W$ is at least $\tau$.
\end{proof}

Our last result is about an optimal (necessary and sufficient) condition for the graph stability. It requires the evaluation of Fiedler vector, which is, in general, hard to determine without the knowledge of the Fiedler value. But for some cases (for example, regular graphs) it is easy to compute, and in those cases the following theorem is helpful in determining a stable cut.  

\begin{thm}
\label{NecessarySufficient}
Let $G(V,E)$ be a graph with size $n (\geq 2)$ and let it be partitioned into two components $G_{1}$ and $G_{2}$ of size $n_1$ and $n_2$ respectively, such that $n_{1}+n_{2} =n,~ n_{i}\geq 1,~i=1,2$. Let $Y$ be the Fiedler vector of the Laplacian $\mathcal{L}$ of $G$ corresponding to the Fiedler value $\lambda_2 (G)$. Then the graph $G$ is stable if and only if one of the following conditions
\begin{enumerate}
    \item $\underset{s\in G_{i}}{\sum}Y(s)>0~\text{and}~\underset{s\in G_{i}}{\sum}(E(s)-\tau)Y(s)\geq \underset{s\in G \backslash G_{i}}{\sum} E(s)Y(s)$
     \item $ \underset{s\in G_{i}}{\sum}Y(s)<0~\text{and}~\underset{s\in G_{i}}{\sum}(E(s)-\tau)Y(s)\leq \underset{s\in G \backslash G_{i}}{\sum} E(s)Y(s)$
      \item $ \underset{s\in G_{i}}{\sum}Y(s)=0~\text{and}~\underset{s\in G_{i}}{\sum}E(s)Y(s)= \underset{s\in G \backslash G_{i}}{\sum} E(s)Y(s)$
\end{enumerate}
is satisfied. Here $E(s)$ is the external cost of the node $s$ and $Y(s)$ is the valuation  of the Fiedler vector corresponding to the $s$-th node.
\end{thm}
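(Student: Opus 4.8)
The plan is to distill the Fiedler eigen-equation into a single scalar identity by contracting it against the all-ones vector, and then read off the three cases from a sign trichotomy on $\sum_{s\in G_1}Y(s)$. First I would write the Laplacian $\mathcal{L}$ of $G$ in $2\times 2$ block form conforming to the partition $(G_1,G_2)$: the diagonal block for $G_i$ is $\mathcal{L}(G_i)+D_i$, where $\mathcal{L}(G_i)$ is the Laplacian of the induced subgraph $G_i$ and $D_i=\mathrm{diag}\big(E(s):s\in G_i\big)$ collects the external costs of the nodes of $G_i$, while the $(1,2)$ block is $-A_{12}$ with $(A_{12})_{st}=d_{st}$ for $s\in G_1$, $t\in G_2$. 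Partitioning the Fiedler vector conformally as $Y=(Y_1^{T},Y_2^{T})^{T}$, the first block row of $\mathcal{L}Y=\lambda_2(G)\,Y$ reads $(\mathcal{L}(G_1)+D_1)Y_1-A_{12}Y_2=\lambda_2(G)\,Y_1$.

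Next I would left-multiply this block row by $\mathbf{1}^{T}$. Using that $\mathbf{1}^{T}\mathcal{L}(G_1)=0$ (a Laplacian has the all-ones vector in its left kernel), that $\mathbf{1}^{T}D_1Y_1=\sum_{s\in G_1}E(s)Y(s)$, and that $\mathbf{1}^{T}A_{12}Y_2=\sum_{t\in G_2}\big(\sum_{s\in G_1}d_{st}\big)Y(t)=\sum_{t\in G\setminus G_1}E(t)Y(t)$ (since $G\setminus G_1=G_2$ and the inner sum is precisely the external cost of $t$), I obtain the key identity
\begin{equation}
\label{keyidentity}
\sum_{s\in G_1}E(s)Y(s)-\sum_{t\in G\setminus G_1}E(t)Y(t)=\lambda_2(G)\sum_{s\in G_1}Y(s).
\end{equation}
Repeating the computation on the second block row, or invoking $\mathbf{1}^{T}Y=0$, gives the mirror identity with $G_1$ replaced by $G_2$, so it suffices to argue through one component.

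Finally I would split into the three sign cases for $\sum_{s\in G_1}Y(s)$ and in each show that $\lambda_2(G)\geq\tau$ (the definition of $G$ being stable) is equivalent to the stated condition. If $\sum_{s\in G_1}Y(s)>0$, then $\lambda_2(G)\geq\tau$ is equivalent to $\lambda_2(G)\sum_{s\in G_1}Y(s)\geq\tau\sum_{s\in G_1}Y(s)$, and substituting (\ref{keyidentity}) and absorbing the $\tau$-term into the first sum yields $\sum_{s\in G_1}(E(s)-\tau)Y(s)\geq\sum_{s\in G\setminus G_1}E(s)Y(s)$, i.e.\ condition (1); the case $\sum_{s\in G_1}Y(s)<0$ is identical except the inequality reverses, giving condition (2); and if $\sum_{s\in G_1}Y(s)=0$ the right-hand side of (\ref{keyidentity}) vanishes, which is condition (3). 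Each step is an equivalence, and since exactly one sign case occurs, exactly one condition is the one that must be checked, so this is a genuine characterization. I expect the delicate point to be the degenerate case $\sum_{s\in G_1}Y(s)=0$: there (\ref{keyidentity}) no longer involves $\lambda_2(G)$, so one needs extra care to make the equivalence with stability airtight — for instance exploiting simplicity of $\lambda_2(G)$ to pick a Fiedler vector not summing to zero on $G_1$, or switching the roles of the two sides — while the remaining work is only the routine bookkeeping of matching Laplacian entries to internal and external costs.
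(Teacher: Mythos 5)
Your proposal follows essentially the same route as the paper: the same block decomposition of $\mathcal{L}$ with diagonal blocks $\mathcal{L}(G_i)+D_i$, the same contraction of $\mathcal{L}Y=\lambda_2(G)Y$ against the indicator vector of $G_1$, and the same key identity $\sum_{s\in G_1}E(s)Y(s)-\sum_{s\in G_2}E(s)Y(s)=\lambda_2(G)\sum_{s\in G_1}Y(s)$, followed by reading off the inequalities from the sign of $\sum_{s\in G_1}Y(s)$. The delicate point you flag in case (3) is genuine --- when $\sum_{s\in G_1}Y(s)=0$ the identity holds regardless of whether $\lambda_2(G)\geq\tau$, so that case cannot by itself certify stability --- but the paper's own proof does not resolve this either (it effectively argues only the necessity direction), so your write-up is, if anything, the more careful of the two.
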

\begin{proof}
Decomposing the Laplacian matrix of the graph $G$ as follows: 
\[\mathcal{L}=
\begin{bmatrix}
L_{1} & -B\\
-B^{T} & L_{2}
\end{bmatrix}
=
\begin{bmatrix}
\mathcal{L}_{1}+D_{1} & -B\\
-B^{T} & \mathcal{L}_{2}+D_{2}
\end{bmatrix},
\]
where $L_{1}$ and $L_{2}$ are the principal submatrices of the Laplacian $\mathcal{L}$ corresponding to the components $G_1$ and $G_2$ respectively, and $\mathcal{L}_{1}$ and $\mathcal{L}_{2}$ are the respective Laplacian matrices of $G_1$ and $G_2$. $D_1~(\text{or}~D_2)$ is the diagonal matrix whose $ii$-th entry is the external cost of $i$-th labelled node in the subgraph $G_1 ~(\text{or}~G_2)$. $B$ is the non-negative matrix whose $ij$-th entry is the weight of the edge $[i,j]$ if $i$-th node of $G_{1}$ connected with the $j$-th node of $G_{2}$, and $0$ otherwise. Thus column sums of the matrix $B$ are the external costs of the respective nodes of subgraph $G_{2}$.

Now considering and evaluating the term $1_{G_1}^T\mathcal{L}Y$, where $1_{G_1}$ is the column vector of size $n$ whose first $n_1$ entries are one and rest are zero, we obtain
\begin{equation}
\label{1stway}
 1_{G_1}^T(\mathcal{L}Y)=\lambda_{2}1_{G_1}^{T}Y=\lambda_{2}(G)\underset{s\in G_{1}}{\sum}Y(s)
\end{equation}
\begin{equation}
\label{2ndway}
(1_{G_1}^T\mathcal{L})Y= (1^T\mathcal{L}_1+1^{T}D_{1} -1^T B)Y_{G_1}.
\end{equation}
Here the column vector $1$ is the vector of size $n_1$ whose all entries are one and it is eigenvector corresponding to the zero eigenvalue of the matrix $\mathcal{L}_1$; $Y_{G_1}$ is conformal with $G_1$. Thus from equation (\ref{2ndway}) we have  
{\small
\begin{equation}
\label{3rd}
(1_{G_1}^T\mathcal{L})Y= 1^{T}D_{1}Y_{G_1} -1^T BY_{G_1} = \underset{s\in G_{1}}{\sum} E(s)Y(s)-\underset{s\in G_{2}}{\sum} E(s)Y(s)
\end{equation}}
Now by equations (\ref{1stway}) and (\ref{3rd}), and the fact $\lambda_2(G) \geq \tau$, we get the desired inequality for the component $G_1$.  Similarly, by choosing an appropriate column vector $1$ the inequality can be obtained for $G_2$.
\end{proof}

\begin{remark}
We can use Theorem \ref{NecessarySufficient} to check the stability of a partitioned component of a graph by considering any partition (temporarily) within the component.

\end{remark}

\section{Conclusion \& Discussion}
This paper concerns about the preservation of the linearized stability of metapopulations after partitioning the metapopulations network. Our study finds that if the internal patch (local populations) dynamics and inter-patch dispersals are appropriately conditioned, then the metapopulations dynamical system will be linearly stable around its co-existential equilibrium solution. The Fiedler value of a metapopulations network that satisfies the threshold criterion has been shown to be sufficient for the metapopulations stability provided the local populations dynamics satisfy the conditions derived 
in this work.

If the intra-patch dynamics of a species remains the same after a partition, then the cut corresponding to which the partitioned components satisfy the Fiedler value threshold criteria does not produce any growing disturbance to the existing species. Hence, such partitions effectively do not alter the ecological dynamics for the existing species in the network. Population and conservation biologists may find such a cut useful consideration in their strategy toward minimizing human-induced habitat destruction and for protecting and maintaining the species in the metapopulations structure. 

The present work provides necessary and sufficient conditions toward obtaining ecologically desirable partitions of the metapopulations networks. These conditions are easy to implement for validating or discarding a given partition and require computing only the external and internal costs. Graph-partitioning algorithms (such as \cite{kumar2019partitioning} and min-cut algorithms) combined with checking for stability conditions of the cut as obtained in this work can be used for detecting such ecologically sustainable partitions. Since optimal graph partitioning is NP-hard\cite{garey2002computers}, this would not necessarily produce a optimal partitions (unless the network size is small) but one can obtain useful solutions and approximations. 

With the considered model, the linearized stability criteria mathematically corroborate all experimental and empirical studies that have concluded that the population dispersals among patches have important role in persistence and/or stabilization of the metapopulations. For metapopulations stability, as discussed in this work, it is sufficient to have  $\lambda_2 \geq \tau$, where  the Fiedler value $\lambda_2$ signifies the connectivity of the patched network and $\tau$ is a fixed threshold level decided by the the local populations dynamics. Also, from the Weyl's monotonicity theorem \cite{bhatia2013matrix} we have the relationship between edges' weight and the Fiedler value $\lambda_2$ of the same graph network, \textit{i.e.,} any increment in the edges' weights increases the graph Laplacian eigenvalues, and in particular, increases the Fiedler value. These results in turn establish that low dispersal rates (corresponds to small $\lambda_2$) among the patches induce equilibrium instability, whereas high dispersal rates (corresponds to large $\lambda_2$) induce stability.

\appendix
\section*{Appendix}
\label{appendixEXAMPLE}
The following illustrative example is provided to show the importance of dispersal connections in the metapopulations stability of a predator-prey system.

\textbf{Example}: In a spatially homogeneous environment, consider the Rosenzweig-MacArthur predator-prey system \cite{kot2001elements} at each patch in a 3-patch spatial network (Figure \ref{Figure3patch}) and assume that at every patch, the dynamics of this system at its unique non-trivial equilibrium solution is unstable in nature. The Rosezweig-MacArthur system is defined as follows:
\begin{eqnarray}
\label{example nonspatial}
\begin{aligned}
\dot{x}_{1,j}(t) &= x_{1,j}(t)\left(1-\frac{x_{1,j}(t)}{\gamma}\right)-\frac{x_{1,j}(t)x_{2,j}(t)}{1+x_{1,j}(t)},\\ 
\dot{x}_{2,j}(t) &= \beta\left(\frac{x_{1,j}(t)}{1+x_{1,j}(t)}-\alpha\right)x_{2,j}(t),~j=1,2,3,
\end{aligned}
\end{eqnarray}
where $x_{1,j}(t)$ and $x_{2,j}(t)$ are the prey and predator density respectively at the $j$-th patch and at a time $t$. 
In the absence of predation, the prey species follows the logistic dynamics with the carrying capacity $\gamma$. The parameters $\beta$ and $\alpha$ can be manipulated to signify the conversion rate and the mortality rate of predators. The unique nontrivial equilibrium solution of (\ref{example nonspatial}) is given by 
$$(x_{1,j}^*,x_{2,j}^*)=\left(\frac{\alpha}{1-\alpha}, (1+x_{1,j}^*)(1-\frac{x_{1,j}^*}{\gamma})\right).$$

At this equilibrium, the Jacobian of the vector field of the system (\ref{example nonspatial}) is given as 

$$\begin{bmatrix}\alpha\left(1+\frac{1}{\gamma}-\frac{2}{\gamma(1-\alpha)}\right)&-\alpha\\ &\\ \beta (1-\alpha-\frac{\alpha}{\gamma}) &0\end{bmatrix}.$$

\begin{figure}%[h!]
\centering
\begin{tikzpicture} [scale=0.55,node_style/.style={circle,fill=black!50!,font=\sffamily\large\bfseries},
   edge_style/.style={draw=black},auto]
    \node[node_style] (v1) at (0,-4) {$1$};
    \node[node_style] (v2) at (10,-4) {$2$};
    \node[node_style] (v3) at (5,2) {$3$};
    \draw[edge_style, below] (v1) edge node{$d^1_{12},d^2_{12}$} (v2);
    \draw[edge_style, right] (v2) edge node{$d^1_{23},d^2_{23}$} (v3);
    \draw[edge_style, left] (v3) edge node{$d^1_{13},d^2_{13}$} (v1);
\end{tikzpicture}
\caption{Figure shows the 3-patch spatial system with the dispersal connections. Dispersal rates between patches denoted by $d^1_{ij}$ and $d^2_{ij}$ are of prey and predator species respectively.}
\label{Figure3patch}
\end{figure}
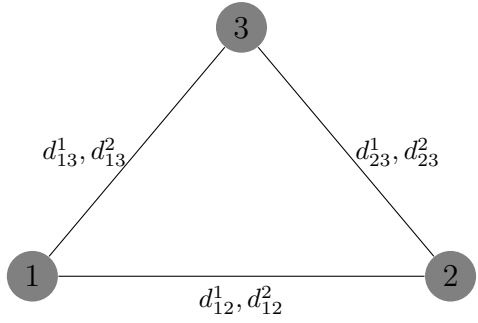

Choosing the parameters' value: $\gamma=2,~\beta=0.2,~\alpha=0.3$, the equilibrium solution $(x_{1,j}^*,x_{2,j}^*)=(3/7,55/49)$ of the system (\ref{example nonspatial}) is feasible and nontrivial, and the corresponding Jacobian has the eigenvalues $0.0107 \pm 0.1813i$. Clearly, these eigenvalues have positive real part, and thus, the dynamics at each patch is unstable around the equilibrium solution with the above choice of parameters' value.

Now considering together the Rosenzweig-MacArthur predator-prey system at each habitat patch and the species' dispersal movement among patches, the dynamics then given as
{\scriptsize
\begin{equation}
\label{example-dispersal}
\left(
\begin{array}{c}
    \dot{x}_{1,1}\\
    \dot{x}_{1,2}\\
    \dot{x}_{1,3}\\
    \dot{x}_{2,1}\\
    \dot{x}_{2,2}\\
    \dot{x}_{2,3}\\
\end{array}   \right)
= \left(\begin{array}{c}
     x_{1,1}\left(1-l_{1}-\frac{x_{1,1}}{\gamma}\right)-\frac{x_{1,1}x_{2,1}}{1+x_{1,1}}\\
    x_{1,2}\left(1-l_{1}-\frac{x_{1,2}}{\gamma}\right)-\frac{x_{1,2}x_{2,2}}{1+x_{1,2}}\\
    x_{1,3}\left(1-l_{1}-\frac{x_{1,3}}{\gamma}\right)-\frac{x_{1,3}x_{2,3}}{1+x_{1,3}}\\
     \beta\left(\frac{x_{1,1}}{1+x_{1,1}}-\alpha-l_{2}\right)x_{2,1}\\
    \beta\left(\frac{x_{1,2}}{1+x_{1,2}}-\alpha-l_{2}\right)x_{2,2}\\
    \beta\left(\frac{x_{1,3}}{1+x_{1,3}}-\alpha-l_{2}\right)x_{2,3}\\
\end{array}   \right) 
-\begin{bmatrix}
\mathcal{L}_{1}&\bf{0}\\
\bf{0}& \mathcal{L}_{2}
\end{bmatrix}
\left(
\begin{array}{c}
    x_{1,1}\\
    x_{1,2}\\
    x_{1,3}\\
    x_{2,1}\\
    x_{2,2}\\
    x_{2,3}\\
\end{array}   \right).
\end{equation}}

Here $\mathcal{L}_{1}=\begin{bmatrix}
d^1_{12}+d^1_{13}&-d^1_{1,2}&-d^1_{13}\\
-d^1_{12}&d^1_{12}+d^1_{23}&-d^1_{23}\\
-d^1_{13}&-d^1_{23}&d^1_{13}+d^1_{23} \end{bmatrix}$ and $\mathcal{L}_{2}=\begin{bmatrix}
d^2_{12}+d^2_{13}&-d^2_{1,2}&-d^2_{13}\\
-d^2_{12}&d^2_{12}+d^2_{23}&-d^2_{23}\\
-d^2_{13}&-d^2_{23}&d^2_{13}+d^2_{23}
\end{bmatrix}$ are the Laplacian matrices for prey species $x_{1}$ and predator species $x_{2}$.

The non-trivial equilibrium solution of this system is $(\bar{x}_{1,1},\bar{x}_{1,2},\bar{x}_{1,3},\bar{x}_{2,1}, \bar{x}_{2,2}, \bar{x}_{2,3}),$ where $ \bar{x}_{1,1} =\bar{x}_{1,2} =\bar{x}_{1,3}=\frac{\alpha +l_2}{1-(\alpha+l_2)}$ and $\bar{x}_{2,1}=\bar{x}_{2,2}=\bar{x}_{2,3}=\left(1+\frac{\alpha+l_2}{1-(\alpha+l_2)}\right)\left(1-l_1-\frac{\alpha+l_2}{\gamma(1-(\alpha+l_2))}\right)$.\\

At this non-trivial equilibrium point, the Jacobian of the system (\ref{example-dispersal}) is given by 
$$\begin{bmatrix}(\alpha+l_{2})\left(1-l_{1}+\frac{1}{\gamma}-\frac{2}{\gamma(1-(\alpha+l_2))}\right)I_{3}-\mathcal{L}_{1}&-(\alpha+l_{2})I_{3}\\ &\\ \beta ((1-l_{1})(1-(\alpha+l_2))-\frac{\alpha+l_2}{\gamma})I_{3}&-\mathcal{L}_{2}\end{bmatrix},$$

Taking the parameter values $d^1_{12}=1=d^1_{13}, d^1_{23}=2, d^2_{12}=2, d^2_{13}=1=d^2_{23}$ and $l_{1}=0.4=2l_{2}$, and other parameters value same as before, the eigenvalues of Jacobian of the dispersal system (\ref{example-dispersal}) are; $-0.0114,~ -0.4386,~-3.0043,$ $~-03.4496,~-5.0004 $ and $-5.4457$ (all have negative real part). Hence, the co-existential equilibrium solution of the system with among-patch dispersal is asymptotically stable.

Another way to interpret the above analysis, a stable dynamical system on the graph (in Figure \ref{Figure3patch}) becomes unstable on its components, when a graph-cut deletes all the three edges. 

\subsection*{Acknowledgements}
The first author acknowledges the financial support by University Grants Commission (UGC), India through Dr. D. S. Kothari Post Doctoral Fellowship (Ref. No. F.4-2/2006(BSR)/MA/17-18/0043).

% Bibliography
%\bibliographystyle{unsrt}
%\bibliography{prl}

\end{document}